\theoremstyle{plain}
\newtheorem*{Main}{Theorem} 
\newtheorem*{corollary}{Corollary}
\newtheorem{lemma}{Lemma}
\newcommand{\refL}[1]{Lemma~\ref{L:#1}}
\newcommand{\refS}[1]{Section~\ref{S:#1}}
\numberwithin{equation}{section}
\newcommand{\be}[1]{\begin{equation}\label{E:#1}}
\newcommand{\refE}[1]{\eqref{E:#1}}
\newcommand{\q}{\quad}
\newcommand{\lf}{\left\lfloor}
\newcommand{\rf}{\right\rfloor}
\newcommand{\lc}{\left\lceil}
\newcommand{\rc}{\right\rceil}
\newcommand{\ff}{\frac}
\newcommand{\ga}{\alpha}
\newcommand{\gb}{\beta}
\newcommand{\gd}{\delta}
\newcommand{\gre}{\epsilon}
\newcommand{\gf}{\varphi}
\newcommand{\gt}{\tau}
\newcommand{\grg}{\gamma}
\newcommand{\tr}{\{p,q,r\}} \newcommand{\Tr}{\{\, p,q,r\,\}}
\newcommand{\ts}{\{p,q,s\}} \newcommand{\Ts}{\{\, p,q,s\,\}}
\newcommand{\qt}{Q_\gt}
\newcommand{\ca}{\mathcal{A}}
\newcommand{\at}{\ca_\gt}  \newcommand{\At}{A(\gt)}
\newcommand{\Ar}{A(p,q,r)}  \newcommand{\As}{A(p,q,s)}
\newcommand{\rep}{\mathcal{R}_{p,q}}
\newcommand{\gs}{\sigma}
\newcommand{\Sp}{\sigma_p} \newcommand{\Ss}{\sigma_s}
\newcommand{\sn}{\theta(i)}
\begin{document}

\title[Inclusion-Exclusion Polynomials]
{On a Class of  Ternary \\ Inclusion-Exclusion Polynomials}
\author{Gennady Bachman}
\address{University of Nevada, Las Vegas\\
Department of Mathematical Sciences\\
4505 Maryland Parkway\\
Las Vegas, Nevada 89154-4020, USA}
\email{bachman@unlv.nevada.edu}
\author{Pieter Moree}
\address{Max-Planck-Institut f\" ur Mathematic\\
Vivatsgasse 7, D-53111 Bonn, Germany.}
\email{moree@mpim-bonn.mpg.de}
\thanks{We wish to thank Yves Gallot for making available to us
his calculations of heights of inclusion-exclusion polynomials}
\subjclass{11B83, 11C08}
\date{March 25, 2010} 
\keywords{Cylcotomic polynomials, inclusion-exclusion polynomials}

\begin{abstract}
A ternary inclusion-exclusion polynomial is a polynomial
of the form
\[
Q_{\tr}=\ff{(z^{pqr}-1)(z^p-1)(z^q-1)(z^r-1)}
{(z^{pq}-1)(z^{qr}-1)(z^{rp}-1)(z-1)},
\]
where $p$, $q$, and $r$ are integers $\ge3$ and relatively prime
in pairs. This class of polynomials contains, as its principle
subclass, the ternary cyclotomic polynomials corresponding to
restricting $p$, $q$, and $r$ to be distinct odd prime numbers.
Our object here is to continue the investigation of the
relationship between the coefficients of $Q_{\tr}$ and $Q_{\ts}$,
with $r\equiv s\pmod{pq}$. More specifically, we consider the
case where $1\le s<\max(p,q)<r$, and obtain a recursive estimate
for the function $A(p,q,r)$ -- the function that gives the maximum
of the absolute values of the coefficients of $Q_{\tr}$. A simple
corollary of our main result is the following absolute estimate.
If $s\ge1$ and $r\equiv\pm s\pmod{pq}$, then $A(p,q,r)\le s$.
\end{abstract}
\maketitle

\section{Introduction}\label{S:1}

Throughout this paper we adopt the convention that the integers
$p$, $q$, and $r$ are relatively prime in pairs and that
$p,q,r\ge3$. To each set $\gt=\Tr$ we associate a polynomial
$\qt$ given by
\be{1.1}
\qt(z)=\ff{(z^{pqr}-1)(z^p-1)(z^q-1)(z^r-1)}
{(z^{pq}-1)(z^{qr}-1)(z^{rp}-1)(z-1)}.
\end{equation}
A routine application of the inclusion-exclusion principle to
the roots of the factors on the right of \refE{1.1} shows that
$\qt$ is indeed a polynomial and we refer to it as a ternary
(or of order three)
inclusion-exclusion polynomial. This class of polynomials
generalizes the class of ternary cyclotomic polynomials which
corresponds to restricting the parameters $p$, $q$, and $r$ to
be distinct odd prime numbers. As the terminology suggests,
the notion of inclusion-exclusion polynomials is not restricted
to the ternary case, and the reader is
referred to \cite{B1} for an introductory discussion of
inclusion-exclusion polynomials and their relation to cyclotomic
polynomials. Our interest in inclusion-exclusion polynomials is
motivated by the study of coefficients of cyclotomic polynomials.
Thus in the ternary case, the only case we shall consider here,
from a certain perspective, questions about coefficients of
cyclotomic polynomials are really questions about coefficients
of inclusion-exclusion polynomials. We shall see below that
adopting this point of view is rather helpful.

The degree of $\qt$ is
\be{1.2}
\gf(\gt)=(p-1)(q-1)(r-1),
\end{equation}
see \cite{B1}, and we write
\[
\qt(z)=\sum_{m=0}^{\gf(\gt)}a_mz^m\q\bigl[a_m=a_m(\gt)\bigr].
\]
It is plain from \refE{1.1} that $a_m$ are integral. Polynomial
$\qt$ is said to be flat if $a_m$ takes on the values $\pm1$
and 0. The existence of flat $\qt$ with an arbitrary large
$\min(p,q,r)$ was first established in \cite{B2}. This was done
by showing that if
\be{1.3}
q\equiv-1\pmod{p}\q\text{and}\q r\equiv1\pmod{pq}
\end{equation}
then $\qt$ is flat. Actually in \cite{B2} this was stated
explicitly for cyclotomic polynomials only, but the argument used
applies equally well to inclusion-exclusion polynomials. In fact,
this observation extends to much of the work on the coefficients
of ternary cyclotomic polynomials (cyclotomic polynomials of low
order in general - see \cite{B1}) and, in particular, to all such
work referenced in this paper. Consequently, we shall ignore this
distinction in the future and, when appropriate, simply state the
corresponding result for inclusion-exclusion polynomials. An
improvement on \refE{1.3} was obtained by T. Flanagan \cite{Fl}
who replaced both the $-1$ and 1 there by $\pm1$. But the
conditions on $q$ in these results were entirely superfluous, for
it was shown by N. Kaplan \cite{Ka} that
\be{1.4}
\text{$\qt$ is flat if $r\equiv\pm1\pmod{pq}$}.
\end{equation}
Our object here is to establish a general principle of which
\refE{1.4} is seen to be a special case. We begin by introducing
some conventions. Put
\[
\At=\max_m|a_m(\gt)|\q\text{and}\q \at=\{\, a_m(\gt)\,\}.
\]
Moreover, in a slight abuse of notation, let us agree to write
$\Ar$ in place of $\At$ when the dependence of $A$ on
the parameters $p$, $q$, and $r$ needs to be made explicit. Let
us emphasize that, in a departure from the usual practice, we
are not assuming any particular order for the parameters $p$,
$q$, and $r$. The structural symmetry of $\qt$ with respect to
these parameters is a key aspect of the problem and plays an
important role in our development. Correspondingly, we shall
explicitly state any additional assumptions on $p$, $q$, and $r$
when it is appropriate. In his work on \refE{1.4}, Kaplan showed
that for $r>\max(p,q)$, $\At$ is determined completely by the
residue class of $r$ modulo $pq$. More precisely, he showed that
if $r\equiv\pm s\pmod{pq}$ and $r,s>\max(p,q)$ then
\be{1.5}
\Ar=\As.
\end{equation}
Moreover, under the stronger assumption $r,s>pq$, he showed that,
in fact, we have
\be{1.6}
\ca_{\tr}=\begin{cases}
\ca_{\ts},  &\text{if $r\equiv s\pmod{pq}$,}\\
-\ca_{\ts}, &\text{if $r\equiv -s\pmod{pq}$.}
\end{cases}\end{equation}
The first of these identities was also proved by Flanagan
\cite{Fl}. These results gave a strong indication that for
$r>\max(p,q)$, the set $\at$ is also determined completely by
the residue class of $r$ modulo $pq$.

We are thus lead to examine the relation between coefficients of
$Q_{\tr}$ and $Q_{\ts}$ with $r\equiv s\pmod{pq}$. This problem
splits naturally into  two parts according to whether
\be{1.7}
r, s>\max(p,q)\q\text{or}\q r>\max(p,q)>s\ge1.
\end{equation}
say. The first of these cases was dealt with completely by the
first author in \cite{B1}. It was shown there that the identity
\refE{1.6} indeed holds in the full range $r, s>\max(p,q)$. Let
us mention in passing another interesting property of sets $\at$
(see \cite{B1, Bz, GM}):
$\at$ is simply a string of consecutive integers, that is
\[
\at=[\, A^-(\gt),A^+(\gt)\,]\cap\mathbb Z,
\]
where $A^-(\gt)$ and $A^+(\gt)$ denote the smallest and the
largest coefficients of $\qt$, respectively.

That leaves the second alternative in \refE{1.7}, and this case
is the object of the present paper. The statement of our main
result will make use of the following extension of the definition
of $\At$. For $s=1, 2$ and relatively prime in pairs triples
$\Ts$ put
\be{1.8}
\As=s-1.
\end{equation}
We note that this convention is not inappropriate when considered
in the context of the corresponding polynomials $Q_{\ts}$. Indeed,
$Q_{\{p,q,2\}}$ is of order 2 and its coefficients are $\pm1$
and 0, see \cite{B1}, and, as is immediate from \refE{1.1},
$Q_{\{p,q,1\}}(z)\equiv1$.

\begin{Main}
If $r\equiv\pm s\pmod{pq}$ and $r>\max(p,q)>s\ge1$, then
\be{1.9}
\As\le\Ar\le\As+1.
\end{equation}
\end{Main}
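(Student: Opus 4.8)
The plan is to funnel the whole statement into a single combinatorial estimate. First I would make two harmless reductions: by the symmetry of $\qt$ in $p,q,r$ one may assume $p<q$, so $\max(p,q)=q$ and $1\le s<q<r$; and since by \cite{B1} the height $\Ar$ depends only on the residue class of $r$ modulo $pq$ once $r>\max(p,q)$, one may replace $r$ by the representative $s+pq$ (if $r\equiv s$) or $2pq-s$ (if $r\equiv-s$), so that in all cases $r>pq>D:=(p-1)(q-1)$ and $\gcd(s,pq)=\gcd(r,pq)=1$. I treat $r\equiv s\pmod{pq}$ first. The engine is the factorization $\qt(z)=\rep(z^r)/\rep(z)$ (immediate from \refE{1.1}), where $\rep(z)=\ff{(z^{pq}-1)(z-1)}{(z^p-1)(z^q-1)}$ is the binary inclusion--exclusion polynomial, of degree $D$, self-reciprocal, with coefficients $\theta(i)\in\{-1,0,1\}$ satisfying $\theta(i)=\theta(D-i)$. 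Since $\rep(0)=1$ one expands $1/\rep(z)=\sum_{n\ge0}g_nz^n$ and checks that $g_n=\tilde g_{\,n\bmod pq}$ for $n\ge0$, where $\tilde g_\nu=\mathbf 1[\nu\in[0,q-1]]-\mathbf 1[\nu\in[p,p+q-1]]\in\{-1,0,1\}$ for $0\le\nu<pq$ is the cyclically read coefficient vector of $(1-z^p)(1+z+\cdots+z^{q-1})$. Multiplying $\rep(z^r)$ by $\sum_n g_nz^n$ and reducing each exponent $m-ir$ modulo $pq$ (legitimate as $r\equiv s$) then gives, for $0\le m\le\gf(\gt)$,
\[
a_m(\tr)=S_{\lfloor m/r\rfloor}(m\bmod pq),\qquad S_j(\mu):=\sum_{i=0}^{\min(j,D)}\theta(i)\,\tilde g_{\,(\mu-is)\bmod pq},
\]
and the \emph{same} function $S_j$ satisfies $a_m(\ts)=S_{\lfloor m/s\rfloor}(m\bmod pq)$. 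So both heights are maxima of $|S_j(\mu)|$ over the pairs $(j,\mu)$ that the respective polynomial reaches.

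Next I would pin down which pairs are reached. Because $r>D$, as $m$ runs over $[0,\gf(\gt)]$ every level $j=\lfloor m/r\rfloor\in\{0,\dots,D-1\}$ occurs; a level $j\le D-2$ occurs on a full interval of $r>pq$ consecutive values of $m$, hence on \emph{every} residue modulo $pq$, while the top level $j=D-1$ occurs on a shorter interval, where the self-reciprocity $a_m(\tr)=a_{\gf(\gt)-m}(\tr)$ matches those coefficients with level-$0$ coefficients $\theta(0)\tilde g_\mu\in\{-1,0,1\}$. Hence, with $M:=\max_{0\le j\le D-2}\max_{0\le\mu<pq}|S_j(\mu)|$, one has $M\le\Ar\le\max(M,1)$. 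On the other hand $s<q$ forces $\lfloor m/s\rfloor\le D-2$ for all $m\le\gf(\ts)$, and the interval of $m$ realizing a given level has length $\le s<pq$, so only a proper subset $W_j\subsetneq\mathbb{Z}/pq\mathbb{Z}$ of residues is reached at level $j$; thus the largest absolute coefficient of $Q_{\ts}$ equals $\max_{0\le j\le D-2}\max_{\mu\in W_j}|S_j(\mu)|\le M$. For $s\ge2$ this largest coefficient is the $\As$ of \refE{1.8}, and we already get $\As\le M\le\Ar$, the left inequality of \refE{1.9} (if $M=0$ then $\As=0$ and $\Ar\le1$, consistent with \refE{1.8}); the case $s=1$, where $\As=0$ by convention and \refE{1.9} amounts to the flatness statement \refE{1.4}, is handled as the base case below.

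What remains --- the right inequality --- reduces by the above to the single estimate $M\le\As+1$: the maximum of $|S_j|$ over \emph{all} residues exceeds its maximum over the visible windows $W_j$ by at most one. This is the crux, and I expect it to be the main obstacle. The idea is that the windows $W_0,W_1,\dots$ are consecutive blocks of residues (each $W_j$ ending where $W_{j+1}$ begins), so one can walk an arbitrary $(j,\mu)$ into some $W_{j'}$ by a short sequence of moves --- changing the level, under which $S$ changes by $\theta(j+1)\tilde g_{(\mu-(j+1)s)\bmod pq}\in\{-1,0,1\}$, and shifting $\mu$ by $s$ modulo $pq$ (which, as $\gcd(s,pq)=1$, reaches every residue) --- and then estimate the accumulated change by $1$ using the simple explicit shapes of $\theta$ (the coefficient vector of $\rep$, a signed translate of the sumset $\{ap+bq\}$) and $\tilde g$ (a block of $+1$'s minus a block of $+1$'s). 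The case $s=1$, where the windows degenerate to a single point and $M\le1$ is precisely \refE{1.4}, is the base of this analysis (one may either cite \cite{Ka} or re-derive it here); the cases $s=2,3,\dots$ then follow from the same walk. The delicate point is that, a priori, $S_j(\mu)$ can change by more than $1$ between adjacent residues, so one cannot simply interpolate: the argument must use the fine distribution of $\{is\bmod pq\}$ against the block/sumset structure of $\tilde g$ and $\theta$ to confine the truncated correlations $S_j(\mu)$ to within one unit of the honest coefficients of $Q_{\ts}$.

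Finally, for $r\equiv-s$ (with $r=2pq-s>pq$) the first step produces instead $a_m(\tr)=\widetilde{S}_{\lfloor m/r\rfloor}(m\bmod pq)$ with $\widetilde{S}_j(\mu):=\sum_{i=0}^{\min(j,D)}\theta(i)\,\tilde g_{(\mu+is)\bmod pq}$, since now $m-ir\equiv m+is\pmod{pq}$; using $\theta(i)=\theta(D-i)$ together with the vanishing $\sum_{i=0}^D\theta(i)\tilde g_{(\nu-is)\bmod pq}=0$ (equivalently $\rep(z^s)(1-z^p)(1+\cdots+z^{q-1})=(1-z^{pq})Q_{\ts}(z)\equiv0\pmod{z^{pq}-1}$) one checks $\widetilde{S}_j(\mu)=-S_{D-1-j}\big((\mu+Ds)\bmod pq\big)$ --- a reflection of levels together with a global sign, neither of which affects an absolute-value maximum --- so the estimates above carry over, and $a_0(\tr)=1$ (whence $\Ar\ge1$) disposes of the remaining borderline cases.
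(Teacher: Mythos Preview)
Your framework is sound and, up to notation, is Kaplan's: the factorization $Q_{\tr}(z)=\rep(z^r)/\rep(z)$, the periodic expansion of $1/\rep$, and the resulting partial-correlation function $S_j(\mu)$ are all correct, and they do give $a_m(\tr)=S_{\lfloor m/r\rfloor}(m\bmod pq)$ and $b_l(\ts)=S_{\lfloor l/s\rfloor}(l\bmod pq)$. Your derivation of the lower bound $\As\le\Ar$ is fine: levels $j\le D-2$ for $Q_{\tr}$ sweep every residue class, so every value $S_j(\mu)$, and in particular every coefficient of $Q_{\ts}$, occurs as a coefficient of $Q_{\tr}$. The reduction of the case $r\equiv -s$ via the reflection $\widetilde S_j(\mu)=-S_{D-1-j}((\mu+Ds)\bmod pq)$ is also correct.

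The gap is exactly where you say it is. You reduce the upper bound to $M\le\As+1$, i.e.\ to the claim that for every pair $(j,\mu)$ with $j\le D-2$ there is a \emph{genuine} coefficient $b_l$ of $Q_{\ts}$ with $|S_j(\mu)-b_l|\le1$, and then you do not prove this claim. The ``walk'' you sketch does not work as stated: the only move with a controlled increment is a level change $j\mapsto j\pm1$, which shifts the window $W_j$ by $s$ residues and alters $S$ by at most $1$; but to slide the window onto an arbitrary $\mu$ may require up to $\lfloor pq/s\rfloor$ such moves, so the na\"{\i}ve accumulated error is $\lfloor pq/s\rfloor$, not $1$. Getting the bound down to $1$ is precisely the content of the theorem, and it genuinely requires the ``fine distribution'' input you allude to but do not supply. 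The paper does not argue globally via maxima of $S_j$; instead it fixes $m$, writes $m=\ga r+\gb_1 s+\grg_1$ (and similarly $m-p,m-q,m-p-q$), sets $l=(\ga+\gb_1)s+\grg_1$, and compares $a_m$ directly to the specific coefficient $b_l$ (or $b_{l-pq}$). The comparison goes through an exact evaluation of the difference as a short signed sum of values of the representability indicator $\chi$ at multiples of $r$ (Lemmas~6--10 and identities \refE{4.16}--\refE{4.17}); a four-way case split on the position of $\ga r$ relative to the intervals $I_j$, $I'_j$ then shows that this signed sum is always $0$ or $\pm1$. That local, coefficient-by-coefficient matching is the missing idea; your global $M$-versus-windows formulation does not bypass it.
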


Evidently this case is more complicated than the case covered by
\refE{1.5} and, according to the calculations kindly supplied by
Yves Gallot, both possibilities implicit in \refE{1.9} do occur
quite readily. On the other hand, numerical evidence suggests
that in this case too the equality \refE{1.5} is the more likely
outcome. We do not know of any simple criteria that can be used
to determine which of the two possibilities in \refE{1.9}
must hold.

Note that under the hypothesis of the theorem we have, by
\refE{1.5},
\be{1.10}
\Ar=A(p,q,pq\pm s).
\end{equation}
In this light \refE{1.9} is seen as a recursive estimate.
Of course, using an absolute upper bound for $\As$ on the right
of \refE{1.9} yields the corresponding upper bound for $\Ar$.
The corollary below gives a particularly simple estimate of this
type. To get it we use the bound
\be{1.11}
\As\le s-\lc s/4\rc \q[s\ge1],
\end{equation}
proved in \cite{B3} (a better estimate for $\min(p,q,s)\ge7$ was
recently announced by J. Zhao and X. Zhang \cite{ZZ}).

\begin{corollary}
Under the hypothesis of the theorem we have
\be{1.12}
\Ar\le s,
\end{equation}
for all $s\ge1$. Moreover, \refE{1.12} holds with strict
inequality for $s\ge5$.
\end{corollary}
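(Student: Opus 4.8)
The plan is to derive \refE{1.12} directly from the Main Theorem by feeding in the known absolute bound \refE{1.11}. First I would invoke the right-hand inequality of \refE{1.9}, namely $\Ar\le\As+1$. Next I would bound $\As$ from above by \refE{1.11}, obtaining $\Ar\le s-\lc s/4\rc+1$. Since $\lc s/4\rc\ge1$ for every integer $s\ge1$, this immediately gives $\Ar\le s$, which is \refE{1.12}; and since $\lc s/4\rc\ge2$ once $s\ge5$, the same computation then yields $\Ar\le s-1$, the asserted strict inequality in that range.

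The only point I would pause over is the boundary range $s\in\{1,2\}$, where $\As$ is not an honest coefficient height but the value $s-1$ assigned by the convention \refE{1.8}. There $s-\lc s/4\rc$ equals $s-1$ as well, so \refE{1.11} remains correct and the argument is unaffected; in the case $s=1$ the conclusion $\Ar\le1$ is precisely the flatness statement \refE{1.4}, so the corollary indeed contains \refE{1.4} as promised in the introduction.

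I do not expect a genuine obstacle here: the entire content of the corollary is already carried by the Main Theorem together with the previously established estimate \refE{1.11}, and the deduction is a one-line computation -- all of the difficulty has been absorbed into the proof of the Main Theorem. Should one want to enlarge the range in which strict inequality holds, one could replace \refE{1.11} by the sharper bound for $\min(p,q,s)\ge7$ announced in \cite{ZZ}, but that refinement is not needed for the statement as it stands.
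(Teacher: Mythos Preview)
Your proposal is correct and matches the paper's own argument: the corollary is deduced from the upper bound in the Main Theorem together with \refE{1.11}, exactly via the computation $\Ar\le\As+1\le s-\lc s/4\rc+1$, from which the conclusions for $s\ge1$ and $s\ge5$ follow since $\lc s/4\rc\ge1$ and $\lc s/4\rc\ge2$ respectively. Your check that \refE{1.11} is consistent with the convention \refE{1.8} for $s\in\{1,2\}$ is a nice touch that the paper leaves implicit.
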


It should be noted that \refE{1.11}, and hence the corollary, hold
with $s$ replaced by $\min(p,q,s)$. Estimate \refE{1.12} sacrifices
precision for convenience and is certainly weaker than the upper
bound of the theorem for $s\ge5$. It is interesting, however, to
consider the quality of this estimate for $s\le4$--the following
observations are based largely on calculations of Yves Gallot.
First we observe that the bound \refE{1.11} is sharp in this range.
For $s=1,2$ this follows by convention \refE{1.8}, and for $s=3,4$
this is verified computationally, e.g., $A(5,7,3)=2$ and
$A(11,13,4)=3$. It follows that for $s\le4$, \refE{1.12} is just
the uniform version of the upper bound of the theorem, and that
the possibility of equality in \refE{1.12} is the only remaining
question. That is, by \refE{1.10}, we are lead to consider the
equation
\be{1.13}
A(p,q,pq+s)=s.
\end{equation}
For $s=1$, \refE{1.13} holds for all choices of $p$ and $q$ since,
trivially, $A(p,q,pq+1)\ge1$. Recall that this is just Kaplan's
result \refE{1.4}. Equation \refE{1.13} also has solutions for
$s=2,3$, for instance $A(3,5,17)=2$ and $A(7,16,7\cdot16+3)=3$. On
the other hand, no solutions were found for $s=4$ with $p, q<100$.

Using the estimate \refE{1.11} carried no penalty for $s\le4$.
For general $s$ we ought to proceed implicitly and use the
function
\[
M(s)=\max_{p,q}\As.
\]
This is well defined by \refE{1.11}. Indeed, using $M(s)$ on the
right of \refE{1.9} gives a sharp form of \refE{1.12} and leads
us to consider the general form of \refE{1.13}, namely the equation
\be{1.14}
A(p,q,pq+s)=M(s)+1.
\end{equation}
The main point is that solutions of \refE{1.14} are particularly
interesting instances of when the upper bound of the theorem is
the best possible. Plainly, the focus here is on the parameter
$s$, and we shall say that $s$ solves \refE{1.14} if the equation
holds for some $\Ts$. Thus we summarize the preceding paragraph
by saying that (i) $M(s)=s-1$, for $s\le4$; and (ii) $s=1, 2, 3$
are solutions of \refE{1.14}. Unfortunately equation \refE{1.14}
takes us into a largely unchartered territory. Indeed, in addition
to the earlier discussion of $s\le4$ we can say with certainty
only that $s=5$ is also a solution. This follows on combining
\refE{1.11} with the explicitly computed
$A(7,11,5)=3=M(5)$ and $A(13,43,13\cdot43+5)=4$.

Finally, observe that for certain types of triples $\gt$, the
application of the theorem may be iterated providing a very
efficient technique for estimating $\At$. For instance, if
$p$ and $q$ are relatively prime we get
\[
A(q,pq\pm1,q(pq\pm1)\pm p)\le A(q,pq\pm1,p)+1=2.
\]

The remainder of this paper gives a proof of the theorem and is
organized as follows. Our proof naturally splits into two parts
corresponding to $s\le2$ (the ``non-ternary case'') and $s\ge3$.
In the next section we collect preliminaries needed for both
cases. The non-ternary case is appreciably simpler and its proof
is carried out in \refS{3}. We include the argument for $s=1$
since it is substantially different from that of \cite{Ka} and
it helps to illuminate the more difficult general argument.
Finally, we complete the proof in \refS{4}.

\section{Preliminaries}\label{S:2}

We begin by observing that given a triple $\Tr$, each integer
$n$ has a unique representation in the form
\be{2.1}\begin{gathered}
n=x_nqr+y_nrp+z_npq+\gd_npqr, \\
0\le x_n<p,\ 0\le y_n<q,\ 0\le z_n<r,\ \gd_n\in\mathbb Z.
\end{gathered}\end{equation}
We shall say that $n$ is ($\gt$-)representable if $\gd_n\ge0$
and let $\chi_{\gt}$ be the characteristic function of
representable integers. When $\gt$ is understood to be fixed we
shall simply write $\chi$ in place of $\chi_{\gt}$. For our
purposes it will be sufficient to consider only $n<pqr$, as we
shall assume henceforth, and in this range the condition
$\gd_n\ge0$ becomes $\gd_n=0$, so that
\be{2.2}
\chi(n)=\begin{cases}
1, &\text{if $\gd_n=0$,}\\
0, &\text{otherwise.}
\end{cases}\end{equation}
The key role of representable integers is evident from the
following identity -- for the proof see \cite{B1}.

\begin{lemma}\label{L:1}
For all $m<pqr$, we have
\be{2.3}
a_m=\sum_{m-p<n\le m}\bigl(\chi(n)-\chi(n-q)-\chi(n-r)
+\chi(n-q-r)\bigr).
\end{equation}
\end{lemma}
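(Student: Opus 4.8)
The plan is to read off the right-hand side of \refE{2.3} as a single coefficient in a product of power series, and then to collapse that product algebraically onto $\qt$. First I would record the generating function of $\chi$. By the uniqueness of the representation \refE{2.1}, we have $\chi(n)=1$ exactly when $n=iqr+jrp+kpq$ with $0\le i<p$, $0\le j<q$, $0\le k<r$, and pairwise coprimality guarantees that each such $n$ comes from exactly one triple $(i,j,k)$. Hence the three finite geometric sums factor and, with $\chi(n)=0$ for $n<0$ consistent with \refE{2.2},
\[
F(z):=\sum_{n}\chi(n)z^n
=\lp\sum_{i=0}^{p-1}z^{iqr}\rp\lp\sum_{j=0}^{q-1}z^{jrp}\rp\lp\sum_{k=0}^{r-1}z^{kpq}\rp
=\ff{(z^{pqr}-1)^3}{(z^{qr}-1)(z^{rp}-1)(z^{pq}-1)}.
\]

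Next I would interpret the summand and the outer sum in \refE{2.3} as operators acting on $F$. Writing $b_n=\chi(n)-\chi(n-q)-\chi(n-r)+\chi(n-q-r)$, a shift of the argument of $\chi$ by $q$ (resp. $r$) corresponds to multiplication of the generating function by $z^q$ (resp. $z^r$), so $(b_n)$ has generating function $(1-z^q)(1-z^r)F(z)$. Summing $b_n$ over the $p$ consecutive indices $m-p<n\le m$ is convolution with $1+z+\cdots+z^{p-1}=(z^p-1)/(z-1)$, whence the right-hand side of \refE{2.3} is precisely the coefficient of $z^m$ in
\[
\ff{z^p-1}{z-1}\,(1-z^q)(1-z^r)\,F(z).
\]

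The heart of the argument is then a routine algebraic simplification. Substituting $F$ and using $(1-z^q)(1-z^r)=(z^q-1)(z^r-1)$, this product collapses, upon comparison with \refE{1.1}, to
\[
\ff{(z^{pqr}-1)^3(z^p-1)(z^q-1)(z^r-1)}{(z-1)(z^{qr}-1)(z^{rp}-1)(z^{pq}-1)}
=(z^{pqr}-1)^2\,\qt(z).
\]
Writing $(z^{pqr}-1)^2=z^{2pqr}-2z^{pqr}+1$, the coefficient of $z^m$ on the right equals $a_m-2a_{m-pqr}+a_{m-2pqr}$, where $a_k=0$ for $k<0$ since $\qt$ is a polynomial. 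For $m<pqr$ the last two terms vanish, leaving exactly $a_m$, which is \refE{2.3}.

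The step needing the most care is the passage to $F$: one must use the uniqueness in \refE{2.1} to ensure the product of geometric series reproduces $\chi$ with no overcounting, and one must check that the only values of $\chi$ entering the coefficient of $z^m$ carry index $\le m<pqr$, so that the definition \refE{2.2} is in force throughout and the spurious factor $(z^{pqr}-1)^2$ is annihilated precisely by the restriction $m<pqr$. Once these points are settled, the remaining identity is a rational-function computation that holds after clearing denominators.
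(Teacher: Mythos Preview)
Your argument is correct. The paper itself does not prove \refL{1}; it simply cites \cite{B1}. Your generating-function proof is a clean, self-contained substitute: the identification of $\sum_n\chi(n)z^n$ with the triple product of finite geometric series is exactly the uniqueness assertion in \refE{2.1}, the passage from the inner $\chi$-combination and the outer length-$p$ sum to the factor $\dfrac{z^p-1}{z-1}(1-z^q)(1-z^r)$ is standard, and the rational-function collapse to $(z^{pqr}-1)^2\qt(z)$ is a direct comparison with \refE{1.1}. The restriction $m<pqr$ then kills the extraneous $(z^{pqr}-1)^2$ exactly as you say.

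One small point worth making explicit: your $F(z)$ encodes the indicator of ``$\gd_n=0$'' rather than ``$\gd_n\ge0$''. This is precisely the convention \refE{2.2} adopted in the paper for $n<pqr$, and since every argument of $\chi$ appearing in the coefficient of $z^m$ is at most $m<pqr$, there is no discrepancy. You already flag this in your final paragraph, so the proof is complete as written.
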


Of course, we interpret $a_m$ as 0 for $m<0$ and $m>\gf(\gt)$.
Having the identity \refE{2.3} in the ``extended range'' $m<0$
and, by \refE{1.2}, $\gf(\gt)<m<pqr$ will prove to be useful for
technical reasons.

Recall that we are after a reduction for $\Ar$ with
$r\equiv\pm s\pmod{pq}$, and eventually we shall assume that $r$
satisfies this condition and that $p<q$. Let us emphasize,
however, that unless any of these conditions are used there is
complete symmetry in the parameters $p$, $q$, and $r$. For
instance, \refL{1} implies that \refE{2.3} with $p$ and $r$
interchanged is also valid. When it is not inconvenient, e.g.,
\refE{2.1} and \refE{2.2}, we make this symmetry perfectly
explicit, but we shall opt for convenience, e.g., Lemmas 1 and 2,
whenever this choice has to be made.

\begin{lemma}\label{L:2}
$
\bigl|\chi(n)-\chi(n-p)-\chi(n-q)+\chi(n-p-q)\bigr|\le1.
$
\end{lemma}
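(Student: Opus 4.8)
The statement is that
\[
\bigl|\chi(n)-\chi(n-p)-\chi(n-q)+\chi(n-p-q)\bigr|\le1,
\]
where $\chi=\chi_\gt$ is the characteristic function of $\gt$-representable integers below $pqr$, i.e. $\chi(n)=1$ iff in the representation \refE{2.1} we have $\gd_n=0$. My approach is entirely combinatorial and local: I would analyze how the coordinate vector $(x_n,y_n,z_n,\gd_n)$ changes under the shifts $n\mapsto n-p$, $n\mapsto n-q$, $n\mapsto n-p-q$, and read off the value of $\chi$ in each case.

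First I would record the basic shift rules. Subtracting $q$ from $n$ decreases $y_n$ by $1$ when $y_n\ge1$, and when $y_n=0$ it ``borrows'': $y_n$ becomes $q-1$ and the next coordinate in the chain $rp$ (namely $y$'s companion) is adjusted. The cleanest way is to track only $\gd_n$, since that is all $\chi$ sees. The key arithmetic fact is that $qr$, $rp$, $pq$, and $pqr$ are the ``place values'' and the digits $x_n,y_n,z_n$ are bounded by $p,q,r$ respectively, so subtracting $p$ (which changes the $y_nrp$ term, as $p\cdot r \cdot$ nothing — rather it is cleanest to note $p = $ does not align with a single place value). Let me instead reason directly: since $p<q<r$ are pairwise coprime and all the moduli involved are products of two of them, the quantity $\gd_n$ as a function of $n$ is a step function that decreases by $0$ or $1$ as $n$ decreases by $1$, changing exactly at multiples of $pqr$; but the shifts here are by $p$, $q$, $p+q$, all much smaller than $pqr$ for the relevant $n$. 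So across an interval of length $p+q<pqr$, $\gd$ can drop by at most... this is not immediate because near $0$ the function $\gd_n$ is $0$ for $n\ge 0$ small and we are told $\chi(n)=0$ for $n<0$.

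The honest route: set $f(n)=\chi(n)-\chi(n-p)-\chi(n-q)+\chi(n-p-q)$. Each of the four terms is $0$ or $1$, so trivially $|f(n)|\le 2$, and $f(n)=\pm 2$ forces either $\chi(n)=\chi(n-p-q)=1$ and $\chi(n-p)=\chi(n-q)=0$, or the reverse. I would show both configurations are impossible. The main obstacle is precisely this case analysis, and the tool is that the map $n\mapsto\gd_n$ is monotone nonincreasing in $n$ (for $n$ in the range under consideration): since increasing $n$ by $1$ cannot decrease $\gd_n$ — each borrow propagates carries upward — and decreasing $n$ cannot increase it. Granting monotonicity of $\gd$, representability $\{\gd=0\}$ is an "upward-closed then eventually we leave it" — more precisely $\{n:\gd_n=0\}$ is an interval (or is upward closed within $[0,pqr)$ after the first representable point). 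If $m_1<m_2<m_3<m_4$ with $\chi$ taking values $1,0,0,1$ on them, then $\gd_{m_1}=0$, $\gd_{m_4}=0$, but $\gd_{m_2}>0$ with $m_1<m_2<m_4$ contradicts that $\{\gd=0\}$ is an interval; similarly the pattern $0,1,1,0$ is impossible. Applying this with $(m_1,m_2,m_3,m_4)$ being $(n-p-q,\,n-q,\,n-p,\,n)$ or the reordering dictated by $p<q$ — note $n-p-q<n-q<n$ and $n-p-q<n-p<n$, and the middle two are the "off-diagonal" shifts — both forbidden patterns $(1,0,0,1)$ and $(0,1,1,0)$ straddle an interior point, giving the contradiction.

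So the plan is: (1) establish that $\gd_n$ is nonincreasing in $n$ on the relevant range, hence $\{n:\chi(n)=1\}$ is an interval; (2) observe $|f(n)|=2$ forces one of the two patterns $1,0,0,1$ or $0,1,1,0$ on the four arguments $n-p-q\le\{n-p,n-q\}\le n$; (3) check that each such pattern places a $\chi=0$ strictly between two $\chi=1$'s (or a $\chi=1$ strictly between two $\chi=0$'s, which by taking complements within the interval is equally impossible), contradicting step (1). The substantive point — and the step I expect to require the most care — is step (1), the monotonicity of $\gd_n$, which must be extracted carefully from the coprimality of $p,q,r$ and the digit bounds in \refE{2.1}, together with handling the boundary convention that $\chi\equiv 0$ below $0$.
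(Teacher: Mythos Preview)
Your plan rests on a claim that is false: the function $n\mapsto\gd_n$ is \emph{not} monotone, and consequently the set $\{n:\chi(n)=1\}$ is \emph{not} an interval. Take for instance $p=3$, $q=5$, $r=7$, so that $qr=35$, $rp=21$, $pq=15$. Then $0=0\cdot35+0\cdot21+0\cdot15$ gives $\gd_0=0$ and $\chi(0)=1$; the unique representation of $1$ is $1=2\cdot35+1\cdot21+1\cdot15-1\cdot105$, so $\gd_1=-1$ and $\chi(1)=0$; and $15=0\cdot35+0\cdot21+1\cdot15$ gives $\gd_{15}=0$ and $\chi(15)=1$. Thus $\gd_n$ goes $0,-1,\dots,0$ and $\chi$ goes $1,0,\dots,1$ on $n=0,1,\dots,15$: neither monotonicity nor the interval property holds. (Your informal justification is also internally inconsistent: ``nonincreasing'' and ``increasing $n$ by $1$ cannot decrease $\gd_n$'' are opposite assertions.) Since step~(1) fails, step~(3) has no force: the patterns $(1,0,0,1)$ and $(0,1,1,0)$ on four increasing arguments are not, in general, forbidden for $\chi$; what must be shown is that they cannot occur for the \emph{specific} four arguments $n-p-q,\ n-q,\ n-p,\ n$.

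The paper itself does not supply a proof but simply cites \cite[Lemma~2]{B3}. The argument there is a genuine case analysis that exploits the arithmetic of the representation \refE{2.1}: one tracks how the coordinates $(x_\bullet,y_\bullet,z_\bullet,\gd_\bullet)$ change under the specific shifts by $p$, $q$, and $p+q$ (these shifts act predictably on $x_\bullet$ and $y_\bullet$ via the residues of $n$ modulo $p$ and $q$, with a possible ``borrow'' that alters $\gd$), and then verifies that the two extremal configurations $\chi=(1,0,0,1)$ and $\chi=(0,1,1,0)$ each force incompatible constraints on those coordinates. The coprimality of $p$, $q$, $r$ enters through the Chinese remainder structure of \refE{2.1}, not through any global monotonicity. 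If you want to produce a self-contained proof, that is the direction to pursue; the shortcut via an interval structure for $\{\chi=1\}$ is unavailable.
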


\begin{proof}
See \cite[Lemma2]{B3}.
\end{proof}

Note that, by \refE{2.1} and \refE{2.2},
\be{2.4}
\chi(n)=\chi(n-pq)\q\text{unless}\q z_n=\gd_n=0.
\end{equation}
But $z_n=0$ if and only if $n$ is a multiple of $r$, say $n=kr$.
Thus
\be{2.5}
\chi(kr+tpq)=\chi(kr)\q[0\le t<r].
\end{equation}
Similarly
\be{2.6}
\chi(kr-tpq)=0\q[t>0].
\end{equation}
These simple observations are quite handy. Thus our next lemma
\cite[Lemma 3]{B1} is an immediate
consequence of \refL{1} and \refE{2.4}.

\begin{lemma}\label{L:3}
Let
\be{2.7}
I_1=(m-q-p,m-q]\cap\mathbb Z\q\text{and}\q
I_2=(m-p,m]\cap\mathbb Z.
\end{equation}
Then we have
\[
a_m=a_{m-pq},
\]
unless there is $n\in I_1\cup I_2$ such that $n$ is a multiple
of $r$ and either $n$ or $n-r$ are representable.
\end{lemma}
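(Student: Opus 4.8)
The plan is to compute the difference $a_m-a_{m-pq}$ directly from \refL{1} and to show it is a sum of terms that all vanish outside the stated exceptional set. First I would apply \refE{2.3} to $a_m$, obtaining a sum over $n\in I_2=(m-p,m]\cap\mathbb Z$. Applying the same identity to $a_{m-pq}$ gives a sum over $(m-pq-p,m-pq]$; the substitution $n\mapsto n-pq$ rewrites it over the same index set $I_2$, but with every argument of $\chi$ lowered by $pq$. Subtracting term by term then yields
\[
a_m-a_{m-pq}=\sum_{n\in I_2}\bigl(D_1-D_2-D_3+D_4\bigr),
\]
where $D_1=\chi(n)-\chi(n-pq)$, $D_2=\chi(n-q)-\chi(n-q-pq)$, $D_3=\chi(n-r)-\chi(n-r-pq)$, and $D_4=\chi(n-q-r)-\chi(n-q-r-pq)$.

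Next I would feed each of these four differences into the periodicity relation \refE{2.4}: one has $\chi(N)=\chi(N-pq)$ unless $z_N=\gd_N=0$, i.e.\ unless $N$ is a multiple of $r$. Thus $D_1,D_3$ can be nonzero only when $r\mid n$ (since $r\mid(n-r)\iff r\mid n$), and $D_2,D_4$ only when $r\mid(n-q)$ (since $r\mid(n-q-r)\iff r\mid(n-q)$). Because $\gcd(q,r)=1$, the conditions $r\mid n$ and $r\mid(n-q)$ are mutually exclusive, so the two groups never interfere. For $n\in I_2$ with $r\mid n$, both $n$ and $n-r$ are multiples of $r$, so \refE{2.6} annihilates the shifted values $\chi(n-pq)$ and $\chi(n-r-pq)$, collapsing $D_1-D_3$ to $\chi(n)-\chi(n-r)$; likewise, when $N:=n-q\in I_1$ is a multiple of $r$, the pair $-D_2+D_4$ collapses to $-(\chi(N)-\chi(N-r))$.

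The conclusion is then read off. Every surviving contribution to $a_m-a_{m-pq}$ has the shape $\pm(\chi(n')-\chi(n'-r))$ for a multiple $n'$ of $r$ lying in $I_2$ (from the first pair) or in $I_1$ (from the second), and since $\chi$ is $\{0,1\}$-valued such a contribution is nonzero only if $\chi(n')\neq\chi(n'-r)$, forcing at least one of $n'$, $n'-r$ to be representable. Hence if no multiple $n'$ of $r$ in $I_1\cup I_2$ has $n'$ or $n'-r$ representable, the whole sum vanishes and $a_m=a_{m-pq}$, which is precisely the contrapositive of the assertion.

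I expect the genuine work to be confined to the bookkeeping of the second paragraph: correctly pairing $D_1$ with $D_3$ and $D_2$ with $D_4$, and verifying that arguments falling outside the range $[0,pqr)$ are handled uniformly (the convention $\chi\equiv0$ off this range, together with \refE{2.6}, makes every stray shifted term vanish), so that the reduction to the two clean differences $\chi(n')-\chi(n'-r)$ is valid without extra case distinctions. This is the one place where an error could creep in; once the pairing is secured, the mutual exclusivity coming from $\gcd(q,r)=1$ and the $\{0,1\}$-valuedness of $\chi$ finish the argument immediately.
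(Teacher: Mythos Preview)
Your proposal is correct and follows the same route the paper indicates: the paper simply states that Lemma~\ref{L:3} ``is an immediate consequence of \refL{1} and \refE{2.4},'' and your argument is precisely the unpacking of that sentence. One small simplification: in \refE{2.4} the exceptional condition $z_N=\gd_N=0$ already encodes \emph{both} that $r\mid N$ and that $N$ is representable, so the representability clause of the lemma drops out directly from \refE{2.4} without the detour through \refE{2.6} and the collapsed differences $\chi(n')-\chi(n'-r)$; but your longer path is equally valid.
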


Next we consider \refE{2.1} modulo $pq$ (modulo a product of two
of the parameters). Let $r^*$ be the multiplicative inverse of
$r$ modulo $pq$ and set
\be{2.8}
f(n)=f_{\gt,r}(n)=x_nq+y_np,
\end{equation}
with $x_n$ and $y_n$ given by \refE{2.1}. Then, in the first
place, we have
\be{2.9}
f(n)\equiv nr^*\pmod{pq}.
\end{equation}
Now let $[N]_{pq}$ denote the least nonnegative residue of $N$
modulo $pq$ and let $\rep$ be the set of integers representable
as a nonnegative linear combination of $p$ and $q$, that is,
\be{2.10}
\rep=\{\, N\mid N=xq+yp,\ x,y\ge0\,\}.
\end{equation}
It follows by \refE{2.8}--\refE{2.10} that, in fact,
\be{2.11}
f(n)=\begin{cases} [nr^*]_{pq}, &\text{if $[nr^*]_{pq}\in\rep$}, \\
 [nr^*]_{pq}+pq, &\text{otherwise.}
\end{cases}\end{equation}
There is an obvious advantage in considering linear combinations
in \refE{2.8} over those in \refE{2.1}. This is a useful
observation in view of the following relationship between the
functions $\chi$ and $f$.

\begin{lemma}\label{L:4}
$\chi(n)=1$ if and only if $f(n)\le\lf n/r\rf$.
\end{lemma}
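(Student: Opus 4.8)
The plan is to unwind the two representations \refE{2.1} and \refE{2.8} and translate the condition $\gd_n=0$ directly into an inequality for $f(n)$. Starting from \refE{2.1} with $n<pqr$, divide through by $pqr$ to get
\[
\ff{n}{pqr}=\ff{x_n}{p}+\ff{y_n}{q}+\ff{z_n}{r}+\gd_n,
\]
so that $\gd_n=\lf n/(pqr)\rf$ precisely when the fractional parts cooperate; more usefully, I would isolate $z_n$ by writing $n=z_npq+(x_nqr+y_nrp+\gd_npqr)$, observe the parenthesized quantity is divisible by $pq$, and hence $z_n\equiv nr^*\pmod{pq}$ after dividing by $pq$ — wait, that is not quite it either. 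The cleaner route is: from \refE{2.1}, $n-\gd_npqr=x_nqr+y_nrp+z_npq$, and dividing by $r$ gives
\[
\ff{n-\gd_npqr}{r}=x_nq+y_np+\ff{z_npq}{r}=f(n)+\ff{z_npq}{r},
\]
with $0\le z_npq/r<pq$. Therefore $f(n)=\lf (n-\gd_npqr)/r\rf$ exactly when $z_npq/r$ contributes no integer part beyond what is already separated — but since $0\le z_n<r$ we have $0\le z_npq/r<pq$, and $f(n)$ is an integer, so in fact $f(n)\le (n-\gd_npqr)/r<f(n)+pq$, whence $f(n)=\lf(n-\gd_npqr)/r\rf$ iff additionally the fractional part $z_npq/r$ has integer part zero, which fails in general. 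I will instead keep the exact identity $\lf(n-\gd_npqr)/r\rf=f(n)+\lf z_npq/r\rf$.

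Now split on the value of $\gd_n$. If $\chi(n)=1$, i.e. $\gd_n=0$, then $\lf n/r\rf=f(n)+\lf z_npq/r\rf\ge f(n)$, giving one direction. For the converse, suppose $\gd_n\ge1$ (recall $n<pqr$ forces $\gd_n\le$ something small, in fact the range \refE{2.1} with $n<pqr$ gives $\gd_n\in\{0\}$ once one checks bounds — but in the extended setting one should argue directly). With $\gd_n\ge1$, $\lf n/r\rf=f(n)+\lf z_npq/r\rf+\gd_npq$, and I must show this is incompatible with $f(n)\le\lf n/r\rf$ only in a way that still lets the lemma's ``only if'' go through; the point is that when $\gd_n\ge1$ we have $\lf n/r\rf\ge f(n)+pq>f(n)$ as well, so the naive inequality cannot distinguish the cases. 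This tells me the correct statement must involve an \emph{upper} bound interplay: the honest claim is $f(n)\le\lf n/r\rf<f(n)+pq$ when $\gd_n=0$, versus $\lf n/r\rf\ge f(n)+pq$ when $\gd_n\ge1$ (using $z_n\ge0$, $\gd_n\ge1$). So $\chi(n)=1\iff\lf n/r\rf<f(n)+pq$. To reconcile with the stated ``$f(n)\le\lf n/r\rf$'', I would then invoke \refE{2.11}: since $f(n)$ is already reduced to lie in $[0,2pq)$ and is determined mod $pq$ by $nr^*$, and since for $\gd_n=0$ one has $n<pqr$ hence $\lf n/r\rf<pq\le$ the relevant threshold, the two formulations coincide. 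The genuinely delicate bookkeeping — and the step I expect to be the main obstacle — is pinning down exactly which residue representative $f(n)$ takes (the $\rep$ versus $\rep+pq$ dichotomy of \refE{2.11}) and checking that the floor $\lf n/r\rf$ lands on the correct side of $pq$ in each case, i.e. verifying the boundary behavior when $[nr^*]_{pq}\notin\rep$. Once that alignment is established the equivalence drops out, and I would conclude by citing \refE{2.1}, \refE{2.2}, \refE{2.8}, and \refE{2.11} to package the argument.
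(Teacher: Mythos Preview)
Your identity
\[
\Bigl\lfloor\frac{n}{r}\Bigr\rfloor
=f(n)+\delta_npq+\Bigl\lfloor\frac{z_npq}{r}\Bigr\rfloor
\]
is exactly right, and the forward direction ($\delta_n=0\Rightarrow f(n)\le\lfloor n/r\rfloor$) is fine. The gap is in the converse: you split on ``$\delta_n\ge1$'', but for $n<pqr$ this case is \emph{vacuous}. Indeed, from \refE{2.1} with $x_n,y_n,z_n\ge0$ one has $n\ge\delta_npqr$, so $n<pqr$ forces $\delta_n\le0$. Your parenthetical remark that ``$\delta_n\in\{0\}$'' is also off: negative $\delta_n$ certainly occur (e.g.\ $n<0$), and that is precisely the case $\chi(n)=0$ you must handle.

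Once you take the correct alternative $\delta_n\le-1$, the argument is a one-liner. Since $0\le z_n\le r-1$ gives $0\le\lfloor z_npq/r\rfloor\le pq-1$, you get
\[
\Bigl\lfloor\frac{n}{r}\Bigr\rfloor
=f(n)+\delta_npq+\Bigl\lfloor\frac{z_npq}{r}\Bigr\rfloor
\le f(n)-pq+(pq-1)=f(n)-1<f(n),
\]
and the equivalence follows. There is no need to invoke \refE{2.11} or worry about which representative $f(n)$ picks; the whole detour through ``$\lfloor n/r\rfloor\ge f(n)+pq$'' and the $\rep$ dichotomy was an artifact of having the wrong sign on $\delta_n$. (For comparison, the paper does not argue any of this in place: it simply cites \cite[(3.26)]{B1}.)
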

\begin{proof}
See \cite[(3.26)]{B1}
\end{proof}

Our next lemma will be the only observation in this section that
considers triples $\gt=\Tr$ and $\gt'=\Ts$ (with
$r\equiv s\pmod{pq}$) simultaneously. To simplify the notation
we consider $p$ and $q$ to be fixed and write $f_r(n)$ for
$f_{\gt,r}(n)$. In this setting function $f_r(n)$ is a function
of two variables but it depends only on the residue classes
of $r$ and $n$ modulo $pq$.

\begin{lemma}\label{L:5}
If $r\equiv s,\, n\equiv n' \pmod{pq}$, then $f_r(n)=f_s(n')$.
\end{lemma}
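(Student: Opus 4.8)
The plan is to deduce the identity directly from the explicit formula \refE{2.11}, which expresses $f_r(n)$ purely in terms of the quantity $[nr^*]_{pq}$ and the set $\rep$. The crucial point is that $\rep$, defined in \refE{2.10}, depends only on $p$ and $q$, which we are holding fixed; it involves neither $r$ nor $n$. Hence \refE{2.11} exhibits $f_r(n)$ as the value of a single fixed function of the integer $[nr^*]_{pq}$ alone (namely, the function that is the identity on $\rep$ and adds $pq$ off $\rep$). Therefore it suffices to show that the hypotheses $r\equiv s$ and $n\equiv n'\pmod{pq}$ force
\be{L5.1}
[nr^*]_{pq}=[n's^*]_{pq}.
\end{equation}

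First I would note that $r$ is invertible modulo $pq$: since the triple $\Tr$ is relatively prime in pairs, $\gcd(r,p)=\gcd(r,q)=1$, so $\gcd(r,pq)=1$, and likewise $\gcd(s,pq)=1$, so $r^*$ and $s^*$ are well defined. Next, because the multiplicative inverse modulo $pq$ depends only on the residue class, $r\equiv s\pmod{pq}$ gives $r^*\equiv s^*\pmod{pq}$. Combining this with $n\equiv n'\pmod{pq}$ yields $nr^*\equiv n's^*\pmod{pq}$, and \refE{L5.1} follows on reducing both sides to their least nonnegative residue. Feeding \refE{L5.1} into \refE{2.11} — with the same set $\rep$ on both sides, as observed above — gives $f_r(n)=f_s(n')$, as claimed.

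I do not anticipate a genuine obstacle here; the lemma is essentially a bookkeeping consequence of \refE{2.11}. The only point requiring a word of care is the one already flagged before the statement: $f_r(n)$, viewed as a function of the two variables $r$ and $n$, genuinely descends to a function of their residue classes modulo $pq$, and \refE{2.11} is precisely the vehicle that makes this transparent. One should also keep in mind, when invoking \refE{2.11} for the triple $\Ts$, that the hypothesis $r\equiv s\pmod{pq}$ together with pairwise coprimality of $\Ts$ is what legitimizes writing $s^*$ — but this is exactly the invertibility remark above.
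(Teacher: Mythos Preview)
Your argument is correct and is exactly the paper's approach: the paper's proof is the single line ``This is immediate from \refE{2.11},'' and your proposal simply unpacks that immediacy by checking $[nr^*]_{pq}=[n's^*]_{pq}$ and noting that $\rep$ depends only on $p,q$.
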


\begin{proof}
This is immediate from \refE{2.11}.
\end{proof}

Now put
\be{2.12}
\gs_k(m)=\sum_{m-k<n\le m}\chi(n).
\end{equation}
Then, by \refL{1}, we have
\be{2.13}
a_m=\Sp(m)-\Sp(m-r)-\Sp(m-q)+\Sp(m-q-r).
\end{equation}
For the purpose at hand we shall find it useful to rewrite
this identity as follows.

\begin{lemma}\label{L:6}
If $r=pq+s$ and $s\ge1$, then $a_m=\Sigma_1+\Sigma_2$, with
$\Sigma_i$ given by
\be{2.14}
\Sigma_1=\Ss(m)-\Ss(m-p)-\Ss(m-q)+\Ss(m-q-p)
\end{equation}
and
\be{2.15}
\Sigma_2=\Sp(m-s)-\Sp(m-s-pq)-\Sp(m-q-s)+\Sp(m-q-s-pq).
\end{equation}
\end{lemma}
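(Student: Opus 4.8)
The plan is to obtain the lemma as a purely formal rearrangement of the identity \refE{2.13} (equivalently, of \refL{1}); in particular I do not expect to invoke the periodicity relations \refE{2.4}--\refE{2.6} at this stage. Starting from \refE{2.13},
\[
a_m=\Sp(m)-\Sp(m-q)-\Sp(m-r)+\Sp(m-q-r),
\]
I would substitute the hypothesis $r=pq+s$, which replaces the two shifts by $r$ with shifts by $s+pq$:
\[
a_m=\Sp(m)-\Sp(m-q)-\Sp(m-s-pq)+\Sp(m-q-s-pq).
\]
The crucial observation is that the last two terms here are exactly the two ``$pq$-shifted'' summands occurring in the definition \refE{2.15} of $\Sigma_2$; they therefore cancel when $\Sigma_2$ is subtracted, leaving
\be{reduce}
a_m-\Sigma_2=\Sp(m)-\Sp(m-q)-\Sp(m-s)+\Sp(m-q-s).
\end{equation}

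Next I would isolate the elementary interchange identity
\be{swap}
\gs_a(x)-\gs_a(x-b)=\gs_b(x)-\gs_b(x-a)\qq(a,b\ge0),
\end{equation}
valid for every integer $x$. This follows at once from \refE{2.12}: since $a,b\ge0$, the interval $(x-a-b,x]$ is partitioned both as $(x-a-b,x-a]\cup(x-a,x]$ and as $(x-a-b,x-b]\cup(x-b,x]$, so $\gs_{a+b}(x)=\gs_b(x-a)+\gs_a(x)=\gs_a(x-b)+\gs_b(x)$, and \refE{swap} is merely the rearrangement of the outer equality.

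Finally I would group the right-hand side of \refE{reduce} as $(\Sp(m)-\Sp(m-s))-(\Sp(m-q)-\Sp(m-q-s))$ and apply \refE{swap} with $(a,b)=(p,s)$ at $x=m$ and at $x=m-q$. This turns \refE{reduce} into $\Ss(m)-\Ss(m-p)-\Ss(m-q)+\Ss(m-q-p)$, which is exactly $\Sigma_1$ as defined in \refE{2.14}. Hence $a_m=\Sigma_1+\Sigma_2$, as required.

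Because the argument is entirely bookkeeping, I do not anticipate a genuine obstacle. The only points demanding a little care are (i) keeping every shift non-negative, which is what makes \refE{swap} applicable (and which is guaranteed by $s\ge1$ and $p,q\ge3$); and (ii) resisting the temptation to simplify the $pq$-shifted terms of $\Sigma_2$ via \refE{2.4}: in this lemma those terms are not simplified but merely cancelled against the $r$-shifts once $r=pq+s$ has been used.
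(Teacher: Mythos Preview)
Your proof is correct and is essentially the same as the paper's: both rest on the elementary swap identity $\Sp(x)-\Sp(x-s)=\Ss(x)-\Ss(x-p)$ (which the paper records in the equivalent form $\Sp(m)=\Ss(m)+\Sp(m-s)-\Ss(m-p)$) together with the substitution $r=pq+s$ in \refE{2.13}. The only cosmetic difference is ordering---the paper rewrites each pair $\Sp(m)-\Sp(m-r)$ and $\Sp(m-q)-\Sp(m-q-r)$ directly, whereas you first peel off $\Sigma_2$ and then apply the swap.
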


\begin{proof}
Observe that
\[
\Sp(m)=\Ss(m)+\Sp(m-s)-\Ss(m-p).
\]
Whence
\be{2.16}\begin{aligned}
\Sp(m)-\Sp(m-r) &=\Sp(m)-\Sp(m-s-pq) \\
&=\Ss(m)-\Ss(m-p)+\Sp(m-s)-\Sp(m-s-pq).
\end{aligned}\end{equation}
Of course, \refE{2.16} also holds with $m-q$ in place of $m$.
Combining this with \refE{2.13} proves the claim.
\end{proof}

In the final lemma of this section we evaluate $\Sigma_2$ in
\refE{2.15}. This evaluation depends on whether the two intervals
\be{2.17}
I'_1=(m-s-q-p,m-s-q]\cap\mathbb Z\q\text{and}\q
I'_2=(m-s-p,m-s]\cap\mathbb Z
\end{equation}
contain a multiple of $r$. Note that since $r=pq+s$, the range
$I'_1\cup I'_2$ contains at most one multiple of $r$, which we
will denote by $\ga r$.

\begin{lemma}\label{L:7}
If $r=pq+s$, $s\ge1$, and $p<q$, then
\[
a_m=\begin{cases}
\Sigma_1, &\text{if $\ga r\notin I'_1\cup I'_2$,}\\
\Sigma_1+(-\chi(\ga r))^j, &\text{if $\ga r\in I'_j$,}
\end{cases}\]
with $\Sigma_1$ fiven by \refE{2.14}.
\end{lemma}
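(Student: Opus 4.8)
The plan is to analyze $\Sigma_2$ as given in \refE{2.15} using the reduction identities \refE{2.5} and \refE{2.6}. Writing $\Sigma_2 = \bigl(\Sp(m-s)-\Sp(m-s-pq)\bigr) - \bigl(\Sp(m-q-s)-\Sp(m-q-s-pq)\bigr)$, each parenthesized block is, by the definition \refE{2.12} of $\sigma_p$, a difference of sums of $\chi$ over intervals of the same length $s$ shifted by $pq$; by \refE{2.4}, the only terms that survive in $\Sp(M)-\Sp(M-pq)$ are those $n$ in the relevant interval that are multiples of $r$ with $z_n=\gd_n=0$, together with the matching contributions of $\chi(n-pq)$. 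More precisely, since $r=pq+s$, a multiple $\alpha r$ lying in $(M-s,M]$ corresponds to $\alpha r - pq = \alpha s + (\alpha-1)pq$ lying in $(M-s-pq, M-pq]$ — wait, one must be careful here: I would instead directly expand $\Sp(M)-\Sp(M-pq) = \sum_{M-s<n\le M}\chi(n) - \sum_{M-pq-s<n\le M-pq}\chi(n)$ and use \refE{2.5}--\refE{2.6} to telescope, reducing it to a single $\chi(\alpha r)$ term (or $0$) according to whether $(M-s,M]$ contains a multiple of $r$.

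Concretely, the first block $\Sp(m-s)-\Sp(m-s-pq)$ is supported on the interval $I'_2=(m-s-p,m-s]$ of \refE{2.17} — here I need to reconcile lengths: note $\Sp$ sums over a window of length $s$, but after the $pq$-shift cancellation the effective support becomes the window of length $p$ appearing in $\Sigma_1$, shifted by $s$; so the first block equals $\chi(\alpha r)\cdot[\alpha r\in I'_2]$ up to sign, and the second block $\Sp(m-q-s)-\Sp(m-q-s-pq)$ equals $\chi(\alpha r)\cdot[\alpha r\in I'_1]$ up to sign. Since $r=pq+s>pq\ge p+q$ (using $p,q\ge3$), the union $I'_1\cup I'_2$ has length $2p < r$ wait it has length $p + p = 2p$ only if the intervals are disjoint, which they are when $q>p$; in any case it contains at most one multiple $\alpha r$ of $r$, as the excerpt already notes. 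I would then check the signs: in $\Sigma_2$ the block over $I'_2$ (the "$m-s$" block) enters with $+$ and the block over $I'_1$ (the "$m-q-s$" block) with $-$, and each surviving term is $-\chi(\alpha r)$ because it is the $\chi(n-pq)$-type contribution that fails to cancel (or, depending on orientation, $+\chi(\alpha r)$); tracking this carefully yields $\Sigma_2 = (-\chi(\alpha r))^j$ when $\alpha r\in I'_j$ and $\Sigma_2=0$ otherwise.

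Combining with \refL{6}, which gives $a_m=\Sigma_1+\Sigma_2$, immediately yields the three cases in the statement. The hypothesis $p<q$ is used precisely to make $I'_1$ and $I'_2$ disjoint (so the two blocks cannot simultaneously contribute and the exponent $j\in\{1,2\}$ is unambiguous) and to fix which of the two windows is which.

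The main obstacle I expect is bookkeeping: getting the exact support intervals and, above all, the exact \emph{signs} right when telescoping $\Sp(M)-\Sp(M-pq)$ via \refE{2.5}--\refE{2.6}. One must be careful that $\alpha r \in (M-s,M]$ is equivalent to the corresponding shifted endpoint conditions defining $I'_1$ or $I'_2$ in \refE{2.17}, and that a multiple of $r$ of the form $\alpha r$ with $\alpha r - pq$ not itself a multiple of $r$ in the relevant range is the only source of a non-cancelling term — a consequence of $r>pq$, which guarantees at most one multiple of $r$ per window of length $\le pq$. A secondary technical point is justifying that $\chi(\alpha r - pq)=0$ or that it cancels against a neighbouring term, which is exactly where \refE{2.6} (i.e. $\chi(kr-tpq)=0$ for $t>0$) and \refE{2.5} come in. Once the sign convention is pinned down on one window, the other follows by the same computation with $m-q$ in place of $m$, exactly as in the proof of \refL{6}.
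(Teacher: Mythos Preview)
Your approach is correct and is exactly the paper's: invoke \refL{6} to reduce the claim to an evaluation of $\Sigma_2$, then collapse each $pq$-shifted difference using \refE{2.4} and \refE{2.6}.

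The bookkeeping worry you flag has a clean resolution once you fix one slip: $\sigma_p$ sums over a window of length $p$, not $s$. With $M=m-s$ one has
\[
\sigma_p(M)-\sigma_p(M-pq)=\sum_{M-p<n\le M}\bigl(\chi(n)-\chi(n-pq)\bigr),
\]
and the index range is precisely $I'_2=(m-s-p,\,m-s]$; no ``reconciling of lengths'' is needed. By \refE{2.4} every summand vanishes unless $n$ is a multiple of $r$, and for $n=\alpha r$ one has $\chi(\alpha r-pq)=0$ by \refE{2.6}, so the surviving term is $+\chi(\alpha r)$ (not $-\chi(\alpha r)$): it is the $\chi(n)$ contribution that stays, not the $\chi(n-pq)$ one. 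Hence this block equals $\chi(\alpha r)\cdot[\alpha r\in I'_2]$. The second block, with $M=m-q-s$, is the identical computation over $I'_1$ and enters $\Sigma_2$ with a minus sign, giving $\Sigma_2=(-\chi(\alpha r))^j$ when $\alpha r\in I'_j$ and $0$ otherwise. The hypothesis $p<q$ indeed serves only to keep $I'_1$ and $I'_2$ disjoint so that $j$ is well defined.
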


\begin{proof}
By \refL{6}, this is just an evaluation of $\Sigma_2$. But
\[
\Sigma_2=\sum_{m-s-p<n\le m-s}\Bigl(\bigl(\chi(n)-\chi(n-pq)\bigr)
-\bigl(\chi(n-q)-\chi(n-q-pq)\bigr)\Bigr).
\]
Therefore, by \refE{2.4}, \refE{2.6}, and \refE{2.17}, we have
\[
\Sigma_2=\begin{cases}
0, &\text{if $\ga r\notin I'_1\cup I'_2$,}\\
(-\chi(\ga r))^j, &\text{if $\ga r\in I'_j$,}
\end{cases}\]
as claimed.
\end{proof}

\section{Proof of theorem: the non-ternary case}\label{S:3}

At this stage we are ready to break the symmetry and, using the
usual convention, put $p<q<r$. Moreover, we note that by
\refE{1.5} it suffices to prove \refE{1.9} for $r=pq+s$, as we
shall assume henceforth.

In this section we deal with $s\le2$. In this case \refE{1.9}
becomes,
\[
s-1\le A(p,q,pq+s)\le s,
\]
by \refE{1.8}, and the first of these inequalities is trivially
satisfied. Therefore to complete the proof in the present case
we need to show that every coefficient $a_m$ of $\qt$ satisfies
\be{3.1}
|a_m|\le s.
\end{equation}
Since inclusion-exclusion polynomials are reciprocal, see
\cite{B1}, we have $a_m=a_{\gf(\gt)-m}$, and it suffices to
prove \refE{3.1} for $m\le\gf(\gt)/2$, as we shall now assume.
Note that by \refE{1.2}, for $m$ in this range the quantity
$\ga r$ occurring in \refL{7} satisfies the condition
\be{3.2}
\ga=\lfloor m/r\rfloor<{\textstyle\ff12}(p-1)(q-1),
\end{equation}
which proves to be quite convenient.

An appeal to \refL{7} leads us to consider two cases. The simplest
case occurs when either $\ga r\notin I'_1\cup I'_2$ or
$\chi(\ga r)=0$. In this case \refL{7} gives
\[
a_m=\Sigma_1=\sum_{m-s<n\le m}
\bigl(\chi(n)-\chi(n-p)-\chi(n-q)+\chi(n-p-q)\bigr),
\]
and the argument is completed by an application of \refL{2}.

Now suppose that $\ga r\in I'_j$ and $\chi(\ga r)=1$. In this case
it is simplest to treat $s=1$ and $s=2$ separately,
and we consider $s=1$ first. Then by \refL{7} we have
\be{3.3}
a_m=\chi(m)-\chi(m-p)-\chi(m-q)+\chi(m-p-q)+(-1)^j.
\end{equation}
We will evaluate this sum using \refL{4}. To this end we observe
that since $\ga r\in I'_j$, every argument of the function $\chi$
occurring in \refE{3.3} is of the form $\ga r+i$ with
$|i|\le p+q$. Moreover, $\ga$ satisfies \refE{3.2}.
But by \refE{2.9}
\[
f(\ga r+i)\equiv\ga+i\pmod{pq},
\]
since both $r$ and its inverse $r^*$ are congruent to 1 modulo
$pq$. It follows from \refL{4} and \refE{2.11} that if
$\chi(\ga r+i)=1$, then, in fact, $f(\ga r+i)=\ga+i$ and $i\le0$.
In particular, $\chi(m)=0$. Furthermore, if $\ga r\in I'_1$,
then we also have $\chi(m-q)=\chi(m-p)=0$, and \refE{3.1} follows
from \refE{3.3}. Moreover, we reach the same conclusion if
$\ga r\in I'_2$ and $\chi(m-p-q)=0$. Finally, \refE{3.3} also
yields \refE{3.1} under the assumptions $\ga r\in I'_2$ and
$\chi(m-p-q)=1$, since in this case we must have $\chi(m-q)=1$.
To see this, write $m-p-q=\ga r-i_0$, so that
\[
q\le i_0<p+q\q\text{and}\q f(m-p-q)=\ga-i_0,
\]
and observe that
\[
[(m-q)r^*]_{pq}=[\ga r-i_0+p]_{pq}=\ga-i_0+p
=x_{m-p-q}q+(y_{m-p-q}+1)p,
\]
by \refE{2.8}. Whence, by \refE{2.11}, $f(m-q)=\ga-i_0+p$
and the desired conclusion follows by \refL{4}. This completes
the proof for $s=1$.

The subcase $s=2$ differs from the previous subcase only in some
technical details. In place of \refE{3.3} we now have, by \refL{7},
\begin{align}
a_m&=\gs_2(m)-\gs_2(m-p)-\gs_2(m-q)
  +\gs_2(m-p-q)+(-1)^j \label{E:3.4}  \\
&=\sum_{m-2<n\le m}\Bigl(\chi(n)-\chi(n-p)-\chi(n-q)
  +\chi(n-p-q)\Bigr) +(-1)^j.   \label{E:3.5}
\end{align}
Since $r=pq+2$, $r^*=(pq+1)/2$ and it is now better to view
arguments of $\chi$ in the form $\ga r+2i+\gre$, with $\gre=0$
or 1. Indeed, by \refE{2.9} we get
\be{3.6}
f(\ga r+2i+\gre)\equiv\ga+i+\gre\ff{pq+1}2\pmod{pq}.
\end{equation}
Now, by \refL{3} we may assume that $\ga r$ is either in
$I_1\cap I'_1$ or in $I_2\cap I'_2$, so that every $\ga r+2i+\gre$
appearing as an argument in \refE{3.5} satisfies $|2i+\gre|<p+q$.
But then, by \refE{3.6}, \refE{3.2}, \refL{4}, and \refE{2.11},
we see that if $\chi(\ga r+2i+\gre)=1$ then we must have $\gre=0$
and $i\le0$. It follows that $\gs_2(m)=0$ and that
\[
\gs_2(m-p), \gs_2(m-q), \gs_2(m-p-q) \le1.
\]
This is sufficient if $\ga r\in I'_2$, for then \refE{3.1} follows
from \refE{3.4}. If on the other hand $\ga r\in I'_1$, then we
also have $\gs_2(m-q)=\gs_2(m-p)=0$, and \refE{3.1} follows in
this case as well. This completes the proof in the non-ternary case.

\section{Proof of theorem: the ternary case}\label{S:4}

Recall from \refS{3} that we fixed $p<q$ and $r=pq+s$. In this
section we will estimate $\At$ in terms of $A(\gt')$, where
$\gt=\Tr$, $\gt'=\Ts$, and $3\le s<q$, and this will require us
to consider coefficients of $\qt$ and $Q_{\gt'}$ simultaneously.
To this end let us adopt the following conventions. We shall
continue to write $a_m$ for coefficients of $\qt$ and we shall
write $b_l$ for coefficients of $Q_{\gt'}$. We shall write $\chi$
and $\chi'$ for the characteristic functions $\chi_{\gt}$ and
$\chi_{\gt'}$ defined in \refE{2.1} and \refE{2.2}, respectively.
We shall also write $\gs_k$ and $\gs'_k$ for the summatory
functions defined in \refE{2.12} with $\chi$ and $\chi'$,
respectively.

Functions $\chi$ and $\chi'$, and hence $\gs$ and $\gs'$, are
closely related. In the next three lemmas we collect certain
properties of these functions.

\begin{lemma}\label{L:9}
If $|j|<s$ then
\[
\chi(kr+j)=\chi'(ks+j).
\]
\end{lemma}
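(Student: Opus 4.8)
The plan is to work directly from the defining representation \refE{2.1}--\refE{2.2} of the characteristic functions, exploiting that $r=pq+s$. Write $n=kr+j$ with $|j|<s$. Since $r\equiv s\pmod{pq}$, one has $kr+j\equiv ks+j\pmod{pq}$, so the congruence content of $n$ modulo $pq$ is the same whether read against the triple $\gt$ or the triple $\gt'$. By \refL{5} (applied with $n=kr+j$ and $n'=ks+j$) this gives $f_r(kr+j)=f_s(ks+j)$; call this common value $f$. So by \refL{4} the claim $\chi(kr+j)=\chi'(ks+j)$ reduces to showing that the two floor inequalities
\[
f\le\lf (kr+j)/r\rf\q\text{and}\q f\le\lf (ks+j)/s\rf
\]
hold or fail together, i.e.\ that $\lf(kr+j)/r\rf=\lf(ks+j)/s\rf=k$ under the hypothesis $|j|<s$ (together with whatever sign restriction on $k$ is in force — presumably $k\ge0$ and the relevant argument $<pqr$, resp.\ $<pqs$, so that \refE{2.2} applies).

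First I would dispose of the floor computation. Since $|j|<s\le r$, we have $-r<-s<j<s\le r$, hence $kr-r<kr+j<kr+r$ and likewise $ks-s<ks+j<ks+s$; thus $\lf(kr+j)/r\rf\in\{k-1,k\}$ and similarly for the $s$-version. The two cases $j\ge0$ and $j<0$ are then handled symmetrically: for $0\le j<s$ both floors equal $k$ (since $0\le kr+j<kr+r$ forces the quotient into $[k,k+1)$), and for $-s<j<0$ both floors equal $k-1$ (since $kr-r<kr+j<kr$). Either way the two floors agree. Combining this with the identity $f_r(kr+j)=f_s(ks+j)$ from \refL{5}, \refL{4} yields $\chi(kr+j)=1\iff f\le\lf(kr+j)/r\rf\iff f\le\lf(ks+j)/s\rf\iff\chi'(ks+j)=1$, which is the assertion.

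The one point requiring care — and the main obstacle — is making sure the range hypotheses under which \refL{4} and the reduction \refE{2.2} are valid are genuinely met for \emph{both} arguments simultaneously. \refL{4} as quoted is unconditional, but the passage from $\gd_n\ge0$ to $\gd_n=0$ in \refE{2.2} was justified only for $0\le n<pqr$; so one must check that $kr+j$ lies in $[0,pqr)$ exactly when $ks+j$ lies in $[0,pqs)$, which again follows from $|j|<s$ together with $r=pq+s$ by an elementary estimate on $k$ (for $0\le k<pq$ the argument $kr+j=k(pq+s)+j$ stays below $pqr$, and symmetrically $ks+j<pqs$). Once the common range is pinned down, the rest is the routine floor bookkeeping above, and there is nothing deeper to do.
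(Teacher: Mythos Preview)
Your proof is correct and follows exactly the paper's approach: invoke \refL{5} to equate the $f$-values, observe that $\lfloor(kr+j)/r\rfloor=\lfloor(ks+j)/s\rfloor$ because $|j|<s$, and conclude via \refL{4}. The paper's proof is the one-line version of your argument, omitting the explicit floor case analysis and the range discussion.
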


\begin{proof}
Since $f_r(kr+j)=f_s(ks+j)$, by \refL{5}, and
$\lfloor(kr+j)/r\rfloor=\lfloor(ks+j)/s\rfloor$,
the claim follows by \refL{4}.
\end{proof}

\begin{lemma}\label{L:10}
For $|k|<pq$, $0<|j|<s$, and $|\gb|\le\lfloor pq/s\rfloor$,
we have
\[
\chi(kr+j+\gb pq)=\chi(kr+j).
\]
\end{lemma}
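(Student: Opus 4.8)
The goal is to show $\chi(kr+j+\gb pq)=\chi(kr+j)$ under the stated constraints. The natural tool is \refL{4}, which characterizes $\chi(n)=1$ by the inequality $f(n)\le\lf n/r\rf$, so I would compare the two sides of this inequality for $n=kr+j$ and $n=kr+j+\gb pq$. First I would observe that $f$ is invariant under adding multiples of $pq$ to its argument: by \refE{2.9} we have $f(n)\equiv nr^*\pmod{pq}$, and moreover by \refE{2.11} $f(n)$ is determined entirely by the residue class $[nr^*]_{pq}$ (it equals either that least residue or that residue plus $pq$, according to the membership test in $\rep$). Hence $f(kr+j+\gb pq)=f(kr+j)$ outright, with no conditions needed. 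So the entire content of the lemma lies in controlling the floor term $\lf n/r\rf$.

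\textbf{Controlling the floor.} The second step is to show $\lf(kr+j+\gb pq)/r\rf=\lf(kr+j)/r\rf=k$, at least when $k$ is in the relevant range and $j,\gb$ are small. Writing $kr+j+\gb pq = kr + (j+\gb pq)$, this reduces to showing $0\le j+\gb pq < r$ fails to cross a multiple-of-$r$ boundary away from $k$; more precisely I want $\lf (j+\gb pq)/r + k\rf = k$, i.e. the fractional displacement $j+\gb pq$ together with $kr$ stays in $[kr, (k+1)r)$ — but since $j+\gb pq$ can be negative this needs a little care. Actually the cleaner route: I'd use $r=pq+s$, so $\gb pq = \gb r - \gb s$, giving $kr+j+\gb pq = (k+\gb)r + (j-\gb s)$. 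Now $|j-\gb s|\le |j| + |\gb| s < s + \lf pq/s\rf\cdot s \le s + pq = r$, wait — that bound is too weak by roughly a factor; I need $|j-\gb s|<r$ which does follow since $|j|<s$ and $|\gb|s\le \lf pq/s\rf s\le pq$, so $|j-\gb s|<s+pq=r$. That alone doesn't pin the floor, but combined with the sign information it should: if $j-\gb s\ge 0$ then $\lf((k+\gb)r+(j-\gb s))/r\rf = k+\gb$; if $j-\gb s<0$ then it's $k+\gb-1$. In either case $\lf(kr+j+\gb pq)/r\rf \in\{k+\gb, k+\gb-1\}$, whereas $\lf(kr+j)/r\rf\in\{k,k-1\}$. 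These do not match in general, so the bare floor comparison is insufficient — the point must be that $\chi$ of these arguments is determined not by $\lf n/r\rf$ being exactly $k$ but by comparison with $f$, and $f$ is bounded.

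\textbf{The real mechanism and the main obstacle.} The correct argument, I expect, exploits that $f(kr+j)\equiv kr^*\cdot j$-type expression, and more usefully that by \refL{5} we may replace $r$ by $s$: $f_r(kr+j)=f_s(ks+j)$, and $f_s(ks+j)=f_r\big((k+\gb)r+(j-\gb s)\big)$ provided $(k+\gb)s+(j-\gb s) = ks+j$, which holds identically. So $f$ takes the same value at $kr+j$ and at $(k+\gb)r+(j-\gb s)=kr+j+\gb pq$, confirming the first step by a different route. Then \refL{4} says $\chi(kr+j)=1 \iff f(kr+j)\le \lf(kr+j)/r\rf = k$ or $k-1$ depending on $\mathrm{sgn}(j)$, and $\chi(kr+j+\gb pq)=1\iff f(kr+j)\le \lf((k+\gb)r+(j-\gb s))/r\rf$. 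Since $|f(kr+j)|$ is bounded — it lies in $\rep\cap[0,2pq)$ and in fact for the representable case $f(kr+j)<pq$ roughly — while the two floor values $k$ (resp.\ $k-1$) and $k+\gb$ (resp.\ $k+\gb-1$) differ by exactly $|\gb|\le\lf pq/s\rf$, the hard part is showing this $\gb$-sized gap never straddles the threshold $f(kr+j)$. I would handle the two sign cases $j>0$ and $j<0$ separately, using that the sign of $j-\gb s$ can be read off since $|\gb s|\le pq$ is much larger than $|j|<s$ — so $\mathrm{sgn}(j-\gb s)=\mathrm{sgn}(-\gb)$ when $\gb\neq 0$, giving floor $k+\gb$ if $\gb<0$ and $k+\gb-1$ if $\gb>0$, i.e.\ in all cases $\lf(kr+j+\gb pq)/r\rf = k+\gb$ if $\gb\le 0$ and $=k+\gb-1$ if $\gb>0$; matching against $k$ (if $j>0$) or $k-1$ (if $j<0$) and using $|k|<pq$ together with the bound on $f$ to confirm both inequalities $f\le(\text{old floor})$ and $f\le(\text{new floor})$ are equivalent. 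This bookkeeping — pinning the four sign-parity subcases and checking the threshold is never crossed — is the main obstacle; everything else is immediate from \refL{4} and \refL{5}.
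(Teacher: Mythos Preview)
Your setup is exactly right: $f$ is unchanged under adding $\gb pq$ (by \refE{2.11} or \refL{5}), so the whole question is whether the shift in $\lf n/r\rf$ can cross the fixed threshold $f(kr+j)$. You correctly compute that the two floors differ by at most $|\gb|\le\lf pq/s\rf$, and you correctly flag the main obstacle: ruling out that $f(kr+j)$ sits in the gap between them. But you never actually supply the idea that closes this gap. Your gestures toward ``the bound on $f$'' and ``$f$ lies in $\rep\cap[0,2pq)$'' are not enough: that global bound on $f$ does not prevent $f(kr+j)$ from landing between $k$ and $k+\gb$. The case analysis you sketch (four sign-parity subcases) cannot succeed without a new input relating $f(kr+j)$ to $k$.

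The missing ingredient is this: write $t_j=[k+js^*]_{pq}-[k]_{pq}$, so that (when $k>0$) one has $f(kr+j)=k+t_j$ or $k+t_j+pq$ by \refE{2.11}. Since $t_js\equiv j\pmod{pq}$ with $0<|j|<s$ and $|t_j|<pq$, one checks that $|t_j|\ge\lf pq/s\rf$ (a nonzero multiple of $s$ of absolute value below $pq$ cannot be congruent to a nonzero $j$ with $|j|<s$ unless the multiplier is at least $\lf pq/s\rf$ in absolute value). Thus $f(kr+j)$ is at distance at least $\lf pq/s\rf$ from $k$, which is exactly the maximum possible shift in the floor. With this in hand the comparison $f\le\lf(kr+j)/r\rf$ versus $f\le\lf(kr+j+\gb pq)/r\rf$ goes through directly, and the paper's proof does essentially that. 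So your plan is on the right track but incomplete at the decisive step.
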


\begin{proof}
There is nothing to prove if $\gb=0$, so assume that
$0<|\gb|\le\lfloor pq/s\rfloor$. Recall that $[N]_{pq}$ denotes
the least nonnegative residue of $N$ modulo $pq$ and that
$r^*=s^*$. Write
\be{4.2}
[k+js^*]_{pq}=[k]_{pq}+t_j.
\end{equation}
Evidently
\be{4.3}
\lfloor pq/s\rfloor\le|t_j|\le pq-\lfloor pq/s\rfloor,
\end{equation}
since $t_js\equiv j\pmod{pq}$. Now suppose that $\chi(kr+j)=1$.
Plainly this is not possible unless $k>0$. Therefore in this
case we may replace $[k]_{pq}$ by $k$ in \refE{4.2} and,
by \refE{2.11} and \refL{4}, we get
\be{4.4}
f(kr+j)=k+t_j\q\text{and}\q t_j<0.
\end{equation}
Also, by \refL{5}, $f(kr+j+\gb pq)=f(kr+j)$. But
\[
\Bigl\lfloor\ff{kr+j+\gb pq}r\Bigr\rfloor\ge k-|\gb|
\ge k-\lfloor pq/s\rfloor,
\]
and the claim in this case follows by \refE{4.3} and \refL{4}.

On the other hand
\[
\Bigl\lfloor\ff{kr+j+\gb pq}r\Bigr\rfloor<k+|\gb|
\le k+\lfloor pq/s\rfloor.
\]
Therefore if $\chi(kr+j+\gb pq)=1$ then, by \refL{4},
$f(kr+j)<k+\lfloor pq/s\rfloor$. Arguing as before one readily
verifies that this implies that \refE{4.4} must hold. This
yields $\chi(kr+j)=1$, and the proof is complete.
\end{proof}

\begin{lemma}\label{L:11}
For $|k|<pq$, $0\le\grg<s$, and $|\gb|\le\lfloor pq/s\rfloor$,
we have
\begin{align*}
\Ss(kr+\grg+\gb pq)-\chi(kr+\gb pq) &=\Ss'(ks+\grg)-\chi'(ks) \\
&=\Ss'(ks+\grg-pq).
\end{align*}
\end{lemma}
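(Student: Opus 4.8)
The plan is to expand every quantity in the statement by its definition \refE{2.12} and then match the resulting sums of values of $\chi$ and $\chi'$ term by term, using Lemmas~\ref{L:9} and \ref{L:10} to pass between the two characteristic functions. Recall that $r=pq+s$, and put $J=\{\,j\in\mathbb Z:\grg-s<j\le\grg\,\}$; since $0\le\grg<s$, the set $J$ has $s$ elements, contains $0$, and every $j\in J\setminus\{0\}$ satisfies $0<|j|<s$. I would first write
\[
\Ss(kr+\grg+\gb pq)=\sum_{j\in J}\chi(kr+\gb pq+j)
=\chi(kr+\gb pq)+\sum_{j\in J\setminus\{0\}}\chi(kr+\gb pq+j).
\]
For each $j\in J\setminus\{0\}$, \refL{10} applies (its hypotheses $|k|<pq$, $0<|j|<s$, $|\gb|\le\lfloor pq/s\rfloor$ being exactly what is assumed here) and removes the shift $\gb pq$, after which \refL{9} replaces $\chi(kr+j)$ by $\chi'(ks+j)$. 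Hence $\Ss(kr+\grg+\gb pq)-\chi(kr+\gb pq)=\sum_{j\in J\setminus\{0\}}\chi'(ks+j)$; carrying out the identical splitting of $\Ss'(ks+\grg)=\sum_{j\in J}\chi'(ks+j)$ shows this equals $\Ss'(ks+\grg)-\chi'(ks)$, which is the first asserted identity.

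For the second identity I would expand $\Ss'(ks+\grg-pq)=\sum_{j\in J}\chi'(ks+j-pq)$ and again isolate $j=0$. For $j\in J\setminus\{0\}$ the argument $ks+j$ is not a multiple of $s$, so the analogue of \refE{2.4} for the triple $\gt'$ gives $\chi'(ks+j-pq)=\chi'(ks+j)$; for $j=0$ the analogue of \refE{2.6}, taken with $t=1$, gives $\chi'(ks-pq)=0$. Therefore $\Ss'(ks+\grg-pq)=\sum_{j\in J\setminus\{0\}}\chi'(ks+j)=\Ss'(ks+\grg)-\chi'(ks)$, which closes the chain of equalities.

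I do not anticipate a serious obstacle: once the index set $J$ is in place, Lemmas~\ref{L:9}--\ref{L:10}, \refE{2.4}, and \refE{2.6} (the latter two read for the triple $\gt'$) do all the work. The one point requiring care — and really the hinge of the argument — is the bookkeeping of $J$: one must check that translating by $\grg$ keeps every nonzero index in the punctured range $0<|j|<s$, so that Lemmas~\ref{L:9} and \ref{L:10} and \refE{2.4} are legitimately invoked, and that $0\in J$, so that the isolated terms $\chi(kr+\gb pq)$, $\chi'(ks)$, and $\chi'(ks-pq)$ are precisely the ones that occur. Both facts are immediate from $0\le\grg<s$.
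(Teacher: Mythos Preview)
Your proposal is correct and follows the same approach as the paper: the paper's proof simply says that the first identity follows from Lemmas~\ref{L:9} and~\ref{L:10} and the second from \refE{2.4} and \refE{2.6} applied to $\chi'$, and your argument is precisely a careful unpacking of this, with the index set $J$ making the term-by-term matching explicit.
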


\begin{proof}
The first identity follows from Lemmas 8 and 9. The second
identity follows from \refE{2.4} and \refE{2.6} with $\chi'$
in place of $\chi$.
\end{proof}

Our preparation is now complete and we are ready to embark on
the main argument. Let $a_m$ be a coefficient of $\qt$ and set
$\ga=\lfloor m/r\rfloor$. Recall from \refS{3} that we may assume
that $\ga$ satisfies \refE{3.2}. Furthermore, by \refL{3}, we
may also assume that
\be{4.5}
\ga r\in I_1\cup I_2.
\end{equation}
Now write
\be{4.6}
\begin{aligned}
m_1&=m=\ga r+\gb_1s+\grg_1, \\ m_3&=m-q=\ga r+\gb_3s+\grg_3,
\end{aligned} \q \begin{aligned}
m_2&=m-p=\ga r+\gb_2s+\grg_2, \\ m_4&=m-p-q=\ga r+\gb_4s+\grg_4,
\end{aligned}\end{equation}
with $0\le\grg_i<s$. Moreover, set
\be{4.7}
l_i=(\ga+\gb_i)s+\grg_i\q\text{and}\q l=l_1
\end{equation}
and observe that
\be{4.8}
l_2=l-p,\ l_3=l-q,\ \text{and}\ l_4=l-p-q.
\end{equation}
From \refE{4.6}, \refE{4.5}, and \refE{2.7} we see that
$s|\gb_i|<p+q+s$, so that
\be{4.9}
|\gb_i|\le\lfloor3q/s\rfloor\le\lf pq/s\rf,
\end{equation}
and, by \refE{3.2},
\be{4.10}
|\ga+\gb_i|<pq.
\end{equation}
Now, by \refE{4.6}, quantities $m_i$ have a representation
in the form
\[
m_i=(\ga+\gb_i)r+\grg_i-\gb_ipq.
\]
Therefore, by \refE{4.10}, \refE{4.9}, \refL{11}, and
\refE{4.7}, we have
\begin{align}
\Ss(m_i)-\chi\bigl((\ga+\gb_i)r-\gb_ipq\bigr)
 &=\Ss'(l_i)-\chi'\bigl((\ga+\gb_i)s\bigr)  \label{E:4.11} \\
 &=\Ss'(l_i-pq).   \label{E:4.12}
\end{align}

We are now in the position to relate the sum $\Sigma_1$ given
by \refE{2.14} and the coefficient $b_l$ of $Q_{\gt'}$ with $l$
given in \refE{4.7}. Using notation \refE{4.6} we write
\be{4.13}
\Sigma_1=\sum_{i=1}^4\sn\Ss(m_i),
\end{equation}
where $\sn=1$, for $i=1,4$, and $\sn=-1$, for $i=2,3$. On the
other hand, by \refE{4.7} and \refE{4.10}, \refL{1} with $r$
replaced by $s$ applies to the coefficient $b_l$. We implement
\refE{2.3} with $\chi'$ in place of $\chi$ and with $p$ and $r$
replaced by $s$ and $p$, respectively, to get
\be{4.14}
b_l=\sum_{i=1}^4\sn\Ss'(l_i),
\end{equation}
by \refE{4.8}. Similarly
\be{4.15}
b_{l-pq}=\sum_{i=1}^4\sn\Ss'(l_i-pq).
\end{equation}
Therefore, by \refE{4.13}, \refE{4.14}, and \refE{4.11}, we have
\[
\Sigma_1-b_l=\sum_{i=1}^4\sn\Bigl(\chi\bigl((\ga+\gb_i)r
-\gb_ipq\bigr)-\chi'\bigl((\ga+\gb_i)s\bigr)\Bigr).
\]
Furthermore applying  \refE{2.5}, \refL{9}, and  \refE{2.6}
to the right side of this expression gives
\be{4.16}\begin{aligned}
\Sigma_1-b_l&=\sum_{\gb_i>0} \sn\Bigl(\chi\bigl((\ga+\gb_i)r
-\gb_ipq\bigr)-\chi'\bigl((\ga+\gb_i)s\bigr)\Bigr)  \\
&=-\sum_{\gb_i>0} \sn\chi'((\ga+\gb_i)s).
\end{aligned}\end{equation}
Moreover if we use \refE{4.15} and \refE{4.12} in place of
\refE{4.14} and \refE{4.11} the same computation yields
\be{4.17}\begin{aligned}
\Sigma_1-b_{l-pq}&=\sum_{i=1}^4
\sn\chi\bigl((\ga+\gb_i)r-\gb_ipq\bigr)\\
&=\sum_{\gb_i\le0} \sn\chi\bigl((\ga+\gb_i)r-\gb_ipq\bigr).
\end{aligned}\end{equation}

We complete the proof by considering the two alternatives of
\refL{7}. Suppose first that $\ga r\notin I'_1\cup I'_2$, with
$I'_j$ given by \refE{2.17}. One then readily verifies that,
in view of \refE{4.5} and \refE{4.6}, we must have either
\be{4.18}
\gb_1=0>\gb_2\ge\gb_3\ge\gb_4,
\end{equation}
if $\ga r\in I_2$, or
\be{4.19}
\gb_4<\gb_3=0<\gb_2\le\gb_1,
\end{equation}
if $\ga r\in I_1$.
In either case $a_m=\Sigma_1$, by \refL{7}, so that \refE{4.16}
holds with $\Sigma_1$ replaced by $a_m$. But under \refE{4.18}
the right side of \refE{4.16} vanishes and we get
\be{4.20}
a_m=b_l.
\end{equation}
Observe that by \refE{4.7}, \refE{3.2}, and \refE{1.2}, index
$l=\ga s+\grg_1$ is arbitrary in the range $l\le\gf(\gt')/2$.
Thus \refE{4.20} says that every integer occurring as a
coefficient of $Q_{\gt'}$ is also a coefficient of $\qt$ and the
first inequality in \refE{1.9} follows.

Now consider \refE{4.19}. In this case \refE{4.16} yields
\[
a_m-b_l=\chi'((\ga+\gb_2)s)-\chi'((\ga+\gb_1)s),
\]
so that we certainly have
\be{4.21}
|a_m-b_l|\le1.
\end{equation}
But \refE{4.20} and \refE{4.21} imply the right side of \refE{1.9},
and it only remain to consider the second alternative of \refL{7}.

Suppose now that, in addition to \refE{4.5},
 $\ga r\in I'_1\cup I'_2$. Then, reasoning as in
\refE{4.18} and \refE{4.19}, we conclude that either
\be{4.22}
\gb_4\le\gb_3\le\gb_2\le0<\gb_1,
\end{equation}
if $\ga r\in I'_2$, or
\be{4.23}
\gb_4\le0<\gb_3\le\gb_2\le\gb_1,
\end{equation}
if $\ga r\in I'_1$. In the first case we get,
by \refL{7}, \refE{4.16}, and
\refE{4.22},
\[
a_m-b_l=\chi(\ga r)-\chi'((\ga+\gb_1)s).
\]
Therefore \refE{4.21} holds in this case as well. In the
second case we appeal to \refE{4.17}
instead of \refE{4.16} to get, by \refL{7} and \refE{4.23},
\[
a_m-b_{l-pq}=\chi\bigl((\ga+\gb_4)r-\gb_4pq\bigr)-\chi(\ga r).
\]
This yields \refE{4.21} with $b_l$ replaced by $b_{l-pq}$,
and completes the proof of the theorem.

\end{document}